\documentclass{amsart}
\usepackage{amsfonts}
\usepackage{latexsym}
\usepackage{amssymb}
\usepackage{amsmath}
\usepackage{bbm}
\usepackage{enumerate}


\allowdisplaybreaks

\newcommand{\R}{\mathbb R}
\newcommand{\N}{\mathbb N}

\newcommand{\E}{\mathbb E}
\newcommand{\Pro}{\mathbb P}
\newcommand{\dif}{\,\mathrm{d}}
\newcommand{\ext}{\mathrm{ext}}

\DeclareMathOperator*{\kmax}{k-max}

\DeclareMathOperator*{\onemax}{1-max}
\DeclareMathOperator*{\nmax}{n-max}

\DeclareMathOperator*{\tmax}{\lceil t\rceil-max}
\DeclareMathOperator*{\sconv}{s-conv}


\newtheorem{thm}{Theorem}[section]

\newtheorem{lemma}[thm]{Lemma}

\newtheorem{proposition}[thm]{Proposition}

\newtheorem{example}{Example}

\theoremstyle{remark}

\numberwithin{equation}{section}


\begin{document}


\title[On Averages of Order Statistics]{Uniform Estimates for Averages of Order Statistics of Matrices}

\author{Richard Lechner}
\address{Institute of Analysis, Johannes Kepler University Linz,
Altenberger Stra\ss e 69, 4040 Linz, Austria} \email{richard.lechner@jku.at}

\author{Markus Passenbrunner}
\address{Institute of Analysis, Johannes Kepler University Linz,
Altenberger Stra\ss e 69, 4040 Linz, Austria} \email{markus.passenbrunner@jku.at}

\author{Joscha Prochno}
\address{Institute of Analysis, Johannes Kepler University Linz,
Altenberger Stra\ss e 69, 4040 Linz, Austria} \email{joscha.prochno@jku.at}

\date{\today}

\begin{abstract}
We prove uniform estimates for the expected value of averages of order statistics of matrices in
terms of their largest entries. As an application, we obtain similar probabilistic estimates for $\ell_p$
norms via real interpolation.
\end{abstract}

\maketitle


\section{Introduction and Main Results} \label{sec:introduction}

Combinatorial and probabilistic inequalities play an important role in a variety of areas of
mathematics, especially in Banach space theory. In \cite{KS1} and \cite{KS2}, S.~Kwapie\'n and
C.~Sch\"utt studied combinatorial expressions involving matrices and obtained inequalities in terms
of the average of the largest entries of the matrix. To be more precise, they showed that
\begin{equation}\label{eq:Kwapien Schuett estimate max-norm}
\frac{1}{n!} \sum_{\pi \in\mathfrak{S}_n} \max_{1\leq i \leq n} |a_{i\pi(i)}| \simeq \frac{1}{n}\sum_{k=1}^n s(k),
\end{equation}
where $s(k)$ is the $k$-th largest entry of the matrix $a$ and $\mathfrak S_n$ the symmetric group.
This estimate seems crucial if one wants to compute the projection constant of symmetric Banach
spaces and related invariants. Among other things, the authors obtained estimates for the positive
projection constant of finite dimensional Orlicz spaces and estimated the order of the projection
constant of the Lorentz spaces $\ell_{2,1}^n$.
Also, the symmetric sublattices of $\ell_1(c_0)$ as well as the finite dimensional symmetric
subspaces of $\ell_1$ were characterized. Further applications and extensions of
\eqref{eq:Kwapien Schuett estimate max-norm} can be found in \cite{KS2,S1,S2,MSS,PS}, just to mention a
few.

The main result of this paper is a generalization of \eqref{eq:Kwapien Schuett estimate max-norm} in
the sense that we study the expected value of averages of higher order statistics of a matrix in a
more general setting described below.
Our method of proof is purely probabilistic in nature, whereas the proof of \eqref{eq:Kwapien Schuett estimate max-norm} in \cite{KS1} uses non-trivial combinatorial arguments.
 
In what follows, given a finite set $G$, we denote the normalized counting measure on $G$ by $\Pro$, i.e., 
\[
\Pro(E) = \frac{|E|}{|G|},\qquad E \subseteq G,
\]
where $|\cdot|$ denotes the cardinality. $\E$ will always denote the expectation with respect to the normalized counting measure.  Moreover, for a vector $x\in\R^n$ with non-negative entries, we denote its $k$-th largest entry by
\[
\kmax\limits_{1\leq i \leq n} x_i.
\]
In particular, $\onemax_{1\leq i \leq n} x_i$ is the maximal value, $\nmax_{1\leq i \leq n} x_i$ the minimal value of $x$. Our main result is the following:

\begin{thm}\label{thm:main}
Let $n,N\in\N$ and $a\in \R^{n\times N}$. Let $G$ be a collection of maps from $I=\{1,\dots,n\}$ to $J=\{1,\dots,N\}$ and $C_G>0$ be a constant only depending on $G$. Assume that for all $i\in I$, $j\in J$ and all different pairs $(i_1,j_1),(i_2,j_2)\in I\times J$ 
\begin{enumerate}[(i)]
\item\label{eq:condition 1} $\Pro(\{g\in G: g(i)=j\})=1/N$,
\item\label{eq:condition 2} $\Pro(\{g\in G: g(i_1)=j_1, g(i_2)=j_2 \}) \leq C_G/N^2$.
\end{enumerate}
Then, for any $\ell \leq n$, 
\begin{equation}\label{eq:main result firs part}
  \frac{c}{N} \sum_{j=1}^{\ell N}s(j) \leq \int_{G} \sum_{k=1}^{\ell}\kmax\limits_{1\leq i \leq n} |a_{ig(i)}| \dif \Pro(g) \leq \frac{2}{N} \sum_{j=1}^{\ell N}s(j),
\end{equation} 
where $c = 2^{-5}(1+2C_G)^{-2}$.
\end{thm}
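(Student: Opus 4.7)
The plan is to handle the two bounds separately, using two standard representations of the sum of the top $\ell$ values of a non-negative vector: a variational (Legendre-type) formula for the upper bound, and a layer-cake identity combined with a Paley--Zygmund style reverse-Jensen estimate for the lower bound.

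\textbf{Upper bound.} I would begin from the identity
\[
\sum_{k=1}^{\ell}\kmax_{1\leq i\leq n} x_i \;=\; \inf_{t\geq 0}\Bigl\{\ell t + \sum_{i=1}^n (x_i - t)_+\Bigr\}, \qquad x\in[0,\infty)^n.
\]
Applying this with $x_i = |a_{ig(i)}|$, fixing any $t \geq 0$, and integrating in $g$, hypothesis \eqref{eq:condition 1} yields $\int_G(|a_{ig(i)}|-t)_+\dif\Pro(g) = N^{-1}\sum_j(|a_{ij}|-t)_+$, hence
\[
\int_G \sum_{k=1}^{\ell}\kmax_i|a_{ig(i)}|\dif\Pro(g) \;\leq\; \ell t + \frac{1}{N}\sum_{i,j}(|a_{ij}|-t)_+.
\]
Applying the same variational identity to the flattened vector of all $nN$ matrix entries (with top $\ell N$) to minimise the right-hand side -- concretely, taking $t = s(\ell N)$ -- yields the upper bound with constant $1\leq 2$.

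\textbf{Lower bound.} This is the more delicate half. I would use the layer-cake identity
\[
\sum_{k=1}^{\ell}\kmax_{i} x_i \;=\; \int_0^\infty \min\bigl(\ell,\,|\{i: x_i>u\}|\bigr)\dif u,
\]
Fubini and hypothesis \eqref{eq:condition 1} to rewrite
\[
\int_G \sum_{k=1}^{\ell}\kmax_i|a_{ig(i)}|\dif\Pro(g) \;=\; \int_0^\infty \E \min(\ell, Z_u) \dif u,
\]
with $Z_u(g) = |\{i:|a_{ig(i)}|>u\}|$ and $\E Z_u = M_u/N$, where $M_u := |\{(i,j):|a_{ij}|>u\}|$. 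Expanding $Z_u^2 = \sum_{i,i'} \mathbbm{1}_{|a_{ig(i)}|>u}\mathbbm{1}_{|a_{i'g(i')}|>u}$ and invoking \eqref{eq:condition 2} on the off-diagonal pairs gives the second-moment bound
\[
\E Z_u^2 \;\leq\; \frac{M_u}{N} + C_G\frac{M_u^2}{N^2} \;=\; \mu + C_G\mu^2, \qquad \mu := \E Z_u.
\]
The crux is then a reverse-Jensen estimate: for any non-negative integer-valued random variable $Z$ with $\E Z = \mu$ and $\E Z^2 \leq \mu + C\mu^2$, and any $\ell\geq 1$,
\[
\E \min(\ell, Z) \;\geq\; c_0 \min(\ell,\mu),
\]
with $c_0$ of the claimed form $2^{-5}(1+2C)^{-2}$. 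I would prove it by case analysis on the size of $\mu$: when $\mu \leq 1$, Cauchy--Schwarz gives $\Pro(Z \geq 1) \geq \mu^2/\E Z^2 \geq \mu/(1+C)$, whence $\E\min(\ell,Z) \geq \mu/(1+C)$; when $\mu \geq 1$, Paley--Zygmund at level $\mu/2$ yields $\Pro(Z \geq \mu/2) \geq \mu^2/(4\E Z^2) \geq 1/(4(1+C))$, and using $\min(\ell,\mu/2) \geq \min(\ell,\mu)/2$ then gives $\E\min(\ell,Z) \geq \min(\ell,\mu/2)\cdot\Pro(Z\geq\mu/2) \geq \min(\ell,\mu)/(8(1+C))$. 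Applied pointwise in $u$ and integrated, this produces
\[
\int_0^\infty \E\min(\ell, Z_u)\dif u \;\geq\; c_0\int_0^\infty \min(\ell, M_u/N)\dif u \;=\; \frac{c_0}{N}\sum_{j=1}^{\ell N}s(j),
\]
the last equality being the same layer-cake identity applied to the flattened matrix with $\ell N$ in place of $\ell$.

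\textbf{Main obstacle.} The main obstacle is the reverse-Jensen lemma with the correct explicit dependence on $C_G$; the remaining ingredients (variational and layer-cake formulas, Fubini, and the moment bounds from hypotheses (i)--(ii)) are routine once these identities are recognised as the right framework.
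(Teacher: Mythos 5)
Your proof is correct, and it takes a genuinely different route from the paper on both halves. For the upper bound the paper replaces $\sum_{j\le \ell N}s(j)$ by an equivalent Orlicz norm $\|a\|_{M_{\ell N}}$ (Lemma~\ref{lem:approximation by orlicz norm}) and then verifies the inequality only on the extreme points of the Orlicz ball; your variational formula $\sum_{k=1}^{\ell}x^*_k=\inf_{t\ge 0}\{\ell t+\sum_i(x_i-t)_+\}$ combined with hypothesis (i) and the choice $t=s(\ell N)$ is more elementary and even yields the constant $1$ in place of $2$. For the lower bound the paper first discretizes: it reduces to $0/1$-valued matrices via an Abel-summation argument (Lemmas~\ref{thm:am} and~\ref{thm: estimate a and tilde a general case}) built on the counting variables $X_m=|h(\{1,\dots,m\})\cap g|$, whose distribution it controls by Paley--Zygmund in Lemmas~\ref{lem:keq1}--\ref{lem:XmXln}. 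Your layer-cake decomposition $\sum_{k=1}^{\ell}x_k^*=\int_0^\infty\min(\ell,|\{i:x_i>u\}|)\dif u$ performs the same reduction continuously: your $Z_u$ is precisely the paper's $X_{M_u}$, and your reverse-Jensen lemma (Cauchy--Schwarz when $\mu\le 1$, Paley--Zygmund at level $1/2$ when $\mu\ge 1$) does the combined work of Lemmas~\ref{lem:keq1}, \ref{lem:Xm} and~\ref{prop:lower estimate k-max}, while the integration over $u$ replaces the two reduction lemmas entirely. All the individual steps check out (the second-moment bound $\E Z_u^2\le\mu+C_G\mu^2$ follows from (ii) applied to the off-diagonal pairs exactly as you indicate), and your argument actually produces the better constant $(8(1+C_G))^{-1}$, which dominates the stated $2^{-5}(1+2C_G)^{-2}$, so the theorem as stated follows.
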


Observe that estimate \eqref{eq:Kwapien Schuett estimate max-norm} \cite[Theorem 1.1]{KS1} is a special case of our result with the choice $\ell=1$ and $G=\mathfrak S_n$, and that for $\ell=1$ and $G=\{1,\dots,n\}^{\{1,\dots,n\}}$ we directly obtain \cite[Lemma 7]{GLSW1}. Note that in this general setting $\E \max_{1\leq i \leq n} |a_{ig(i)}|$ was already studied in \cite{KS2}. In a slightly different setting, order statistics were considered also in \cite{GLSW1,GLSW2,GLSW3,GLSW4,GLSW5}.

We will now present two natural choices for the set $G$ that appear frequently in the literature
(cf. \cite{KS1,KS2,S1,S2,S3,PS,GLSW1,G,JMST,P}).

\begin{example}
If $N=n$ and $G=\mathfrak S_n$ is the group of permutations of the numbers $\{1,\dots,n\}$, then
\[
\Pro(\pi(i)=j) = \frac{1}{n}, \qquad 1\leq i,j \leq n, 
\]
and for $(i_1,j_1)\neq (i_2,j_2)$
\[
\Pro(\pi(i_1)=j_1,\pi(i_2)=j_2) \leq  \frac{1}{n(n-1)} \leq \frac{2}{n^2}.
\] 
This means that $C_{G}\leq 2$. Hence, Theorem \ref{thm:main} implies
\[
\frac{1}{800} \frac{1}{n}\sum_{j=1}^{\ell n}s(j) \leq \frac{1}{n!} \sum_{\pi\in\mathfrak S_n} \sum_{k=1}^\ell \kmax_{1\leq i \leq n} |a_{i\pi(i)}| \leq 2 \frac{1}{n}\sum_{j=1}^{\ell n}s(j).
\]
\end{example}

\begin{example}
If $N=n$ and $G$ is the set of all mappings from $\{1,\dots,n\}$ into $\{1,\dots,n\}$, then
\[
\Pro(g(i)=j) = \frac{1}{n}, \qquad 1\leq i,j \leq n, 
\]
and for $(i_1,j_1)\neq (i_2,j_2)$
\[
\Pro(g(i_1)=j_1,g(i_2)=j_2) \le  \frac{1}{n^2}.
\] 
This means that $C_{G}=1$. Hence, Theorem \ref{thm:main} implies
\[
\frac{1}{288} \frac{1}{n}\sum_{j=1}^{\ell n}s(j) \leq \frac{1}{n^n} \sum_{g\in G} \sum_{k=1}^\ell \kmax_{1\leq i \leq n} |a_{ig(i)}| \leq 2 \frac{1}{n}\sum_{j=1}^{\ell n}s(j).
\]
\end{example}

Another combinatorial inequality that was obtained in \cite[Theorem 1.2]{KS1} and which turned out to be crucial to study and characterize symmetric subspaces of $L_1$ (cf. \cite{S1,S2,PS}) states that for all $1\leq p \leq \infty$
\begin{equation}\label{eq:Kwapien Schuett estimate p-norm}
\frac{1}{n!}\sum_{\pi\in\mathfrak S_n} \Big( \sum_{i=1}^n |a_{i\pi(i)}|^p \Big)^{1/p} \simeq \frac{1}{n}\sum_{k=1}^n s(k) + \Big(\frac{1}{n} \sum_{k=n+1}^{n^2} s(k)^p \Big)^{1/p}.
\end{equation}
In Section \ref{sec:applications}, we will use Theorem \ref{thm:main} to generalize this result and show that the lower bound in \eqref{eq:Kwapien Schuett estimate p-norm} can be naturally derived via real interpolation. The upper bound is quite easily obtained and we just follow \cite{KS1}. Please note that averages of order statistics of matrices naturally appear, as they are strongly related to the $K$-functional of the interpolation couple $(\ell_1,\ell_\infty)$. Again, two typical choices for the set of maps $G$ are $\mathfrak S_n$ and $\{1,\dots,n\}^{\{1,\dots,n\}}$. We will prove the following result:

\begin{thm}\label{thm:application}
Let $n,N\in\N$, $a\in \R^{n\times N}$, and $1\leq p < \infty$. Let $G$ be a collection of maps from $I=\{1,\dots,n\}$ to $J=\{1,\dots,N\}$ and $C_G>0$ be a constant only depending on $G$. Assume that for all $i\in I$, $j\in J$ and all different pairs $(i_1,j_1),(i_2,j_2)\in I\times J$ 
\begin{enumerate}[(i)]
\item $\Pro(\{g\in G: g(i)=j\})=1/N$,
\item $\Pro(\{g\in G: g(i_1)=j_1, g(i_2)=j_2 \}) \leq C_G/N^2$.
\end{enumerate}
Then
\begin{align*}
C\bigg[ \frac{1}{N} \sum_{k=1}^N s(k) + \Big(\frac{1}{N}\sum_{k=N+1}^{nN} s(k)^p\Big)^{1/p}\bigg] & \leq \E \Big( \sum_{i=1}^n |a_{ig(i)}|^p \Big)^{1/p} \\
& \leq \frac{1}{N} \sum_{k=1}^N s(k) + \Big(\frac{1}{N}\sum_{k=N+1}^{nN} s(k)^p\Big)^{1/p},
\end{align*}
where $C>0$ is a constant only depending on $C_G$.
\end{thm}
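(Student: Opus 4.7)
The plan is to work with the decomposition $a = u + v$, where $u$ holds the $N$ largest entries of $a$ in modulus and $v = a - u$ contains the rest. Up to absolute constants, this is the near-optimal decomposition for the $K$-functional $K(N^{1-1/p}, a; \ell_1^{nN}, \ell_p^{nN})$, which, by Holmstedt's formula, is comparable to $N$ times the right-hand side of the asserted inequality. The theorem may therefore be rephrased as the equivalence
\[
\E\Bigl(\sum_{i=1}^n |a_{ig(i)}|^p\Bigr)^{1/p} \simeq \frac{1}{N} K\bigl(N^{1-1/p}, a; \ell_1, \ell_p\bigr),
\]
and this is where real interpolation enters; it also motivates the choice of $u$ and $v$.

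The upper bound is short. Writing $y_g := (a_{ig(i)})_{i=1}^n = u_g + v_g$ and using the triangle inequality for $\|\cdot\|_p$ gives $\|y_g\|_p \leq \|u_g\|_p + \|v_g\|_p$. Since $\|\cdot\|_p \leq \|\cdot\|_1$ on $\R^n$ for $p \geq 1$, hypothesis (i) yields
\[
\E \|u_g\|_p \leq \E \|u_g\|_1 = \frac{1}{N} \sum_{i,j} |u_{ij}| = \frac{1}{N} \sum_{k=1}^N s(k),
\]
while Jensen's inequality for the concave map $t \mapsto t^{1/p}$ together with the linear identity $\E \|v_g\|_p^p = \frac{1}{N} \sum_{k>N} s(k)^p$ gives $\E \|v_g\|_p \leq \bigl(\frac{1}{N}\sum_{k > N} s(k)^p\bigr)^{1/p}$. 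Summing these two estimates produces the upper bound.

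For the lower bound, set $A := \frac{1}{N}\sum_{k=1}^N s(k)$, $B := \bigl(\frac{1}{N}\sum_{k > N} s(k)^p\bigr)^{1/p}$ and $M := s(N+1)$. The estimate $\E \|y_g\|_p \geq \E \|y_g\|_\infty \geq c A$ is immediate from Theorem~\ref{thm:main} applied with $\ell = 1$, so the core task is to show $\E \|y_g\|_p \gtrsim B$ in the regime where $B$ is not already dominated by $A$. I plan a dichotomy: if $M > B$, then $A \geq s(N) \geq s(N+1) = M > B$, so $A + B < 2A$ and the bound $\E \|y_g\|_p \geq c A$ alone suffices; if $M \leq B$, I apply a Paley--Zygmund argument to the non-negative random variable $Z := \|v_g\|_p^p$.

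This Paley--Zygmund step is the main technical obstacle. Linearity and hypothesis (i) give $\E Z = B^p$; expanding $\E Z^2 = \sum_{i_1, i_2} \E |v_{i_1 g(i_1)}|^p |v_{i_2 g(i_2)}|^p$ into its diagonal and off-diagonal parts and using (i) and (ii) (the latter for $i_1 \neq i_2$) yields
\[
\E Z^2 \leq \frac{1}{N}\sum_{i,j} |v_{ij}|^{2p} + \frac{C_G}{N^2}\Bigl(\sum_{i,j} |v_{ij}|^p\Bigr)^2 \leq M^p B^p + C_G B^{2p} \leq (1 + C_G) B^{2p},
\]
where the last bound uses $|v_{ij}| \leq M \leq B$. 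Paley--Zygmund then delivers $\Pro(Z \geq B^p/2) \geq (4(1+C_G))^{-1}$, whence $\E \|v_g\|_p = \E Z^{1/p} \gtrsim B$ with a constant depending only on $C_G$. Since $|v_{ig(i)}| \leq |a_{ig(i)}|$ pointwise, $\E \|y_g\|_p \geq \E \|v_g\|_p \gtrsim B$ in this case, and combining the two cases completes the lower bound.
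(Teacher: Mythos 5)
Your upper bound is essentially identical to the paper's (split off the $N$ largest entries, use $\|\cdot\|_p\le\|\cdot\|_1$ on the head and Jensen on the tail), but your lower bound takes a genuinely different route, and it is correct. The paper bounds $\E\|a(g)\|_{\ell_p^n}$ from below by the interpolation norm $\|a\|_{1-1/p,\,p}$ of the couple $\bigl(L_1^{|G|}(\ell_1^n),L_1^{|G|}(\ell_\infty^n)\bigr)$, identifies the $K$-functional with averaged partial sums of order statistics, and then invokes Theorem~\ref{thm:main} for \emph{every} $\ell=1,\dots,n$, summing the resulting estimates $\bigl(\frac{1}{\ell N}\sum_{j\le \ell N}s(j)\bigr)^p\ge \frac1N\sum_{j=\ell N+1}^{(\ell+1)N}s(j)^p$ to recover the $\ell_p$ tail. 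You use Theorem~\ref{thm:main} only in the case $\ell=1$ (to get the term $A=\frac1N\sum_{k\le N}s(k)$ via $\|y_g\|_p\ge\|y_g\|_\infty$) and obtain the tail term $B$ by a direct second-moment computation: your identities $\E Z=B^p$ and the off-diagonal bound $C_GB^{2p}$ from hypothesis (ii) check out, and the dichotomy on $M=s(N+1)$ versus $B$ is exactly what makes the diagonal term $M^pB^p$ in $\E Z^2$ harmless (in the opposite case $A\ge s(N)\ge M>B$ makes $B$ redundant). What each approach buys: yours is shorter, self-contained, and needs only the classical $\ell=1$ (Kwapie\'n--Sch\"utt type) estimate rather than the full order-statistics theorem, essentially rerunning the Paley--Zygmund machinery of Section~\ref{sec: lower bound} at the level of $\ell_p$ sums; the paper's interpolation route is deliberately chosen to exhibit how averages of order statistics arise from the $K$-functional of $(\ell_1,\ell_\infty)$, which is the conceptual point of Section~\ref{sec:applications}, and it is the argument that would generalize to other interpolation parameters. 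Two cosmetic remarks only: you should note that the Paley--Zygmund step presupposes $B>0$ (the claim is vacuous otherwise), and the Holmstedt reformulation in your opening paragraph is motivation rather than a step of the proof, so nothing needs to be justified there.
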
 

The organization of the paper is as follows. In Section \ref{sec: lower bound}, we will prove the lower estimate in \eqref{eq:main result firs part}. This is done by reducing the problem to the case of matrices only taking values in $\{0,1\}$ and showing the estimate for this subclass of matrices. In Section \ref{subsec: upper bound G}, we establish the upper bound in \eqref{eq:main result firs part} by passing from averages of order statistics to equivalent Orlicz norms and using an extreme point argument. Section \ref{sec:applications} contains the proof of Theorem \ref{thm:application}.   

\section{Notation and Preliminaries}
 
Throughout this paper we will use $|E|$ to denote the cardinality  of a finite set $E$. By $\mathfrak S_n$ we denote the symmetric group on the set $\{1,\dots,n\}$. We will denote by $\lfloor x \rfloor$ and $\lceil x\rceil$ the largest integer $m\leq x$ and the smallest integer $m\geq x$, respectively.

For an arbitrary matrix $a=(a_{ij})_{i,j=1}^{n,N}$, we denote by $(s(k))_{k=1}^{nN}$ the decreasing rearrangement of $(|a_{ij}|)_{i,j=1}^{n,N}$. To avoid confusion, in certain cases we write $(s_a(k))_{k=1}^{nN}$ to emphasize the underlying matrix $a$. 

Please also recall that the Paley-Zygmund inequality for non-negative random variables $Z$ and $0<\theta<1$ states that
\begin{equation}\label{ine:paley zygmund}
 \Pro(Z\geq \theta \cdot\E Z)\geq (1-\theta)^2\frac{(\E Z)^2}{\E Z^2}.
\end{equation}

A convex function $M:[0,\infty)\rightarrow[0,\infty)$ is called an Orlicz function if $M(0)=0$ and if $M$ is not constant. Given an Orlicz function $M$, the Orlicz sequence space $\ell_M^n$ is $\R^n$ equipped with the Luxemburg norm 
$$
  \|x\|_M=\inf\bigg\{\lambda >0 : \sum_{i=1}^n M\left(\frac{|x_i|}{\lambda}\right)\leq 1\bigg\}.
$$
For example, the classical $\ell_p$ spaces are Orlicz spaces with $M(t)=p^{-1}t^p$. The closed unit ball of the space $\ell_M^n$ will be denoted by $B_M^n$. We write $\ext (B_M^n)$ for the set of extreme points of $B_M^n$ and $\sconv (M)$ shall denote the set of points of strict convexity of $M$. 
We will make use of the following characterization of extreme points of $B_M^n$:
\begin{lemma}[\cite{W}, Lemma 1]\label{lem:extreme points of orlicz balls}
Let $M$ be an Orlicz function. Then $x\in \ext (B_M^n)$ if and only if
\begin{enumerate}[(i)]
\item $\sum_{i=1}^nM(|x_i|)=1$,
\item there exists at most one index $i_0\in\N$, $1\leq i_0 \leq n$ such that $x_{i_0}\not\in \pm\sconv (M)$. 
\end{enumerate}
\end{lemma}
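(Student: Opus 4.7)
The plan is to establish both implications by a standard extreme-point calculus: for necessity, to produce an explicit non-trivial decomposition $x=\tfrac12(y+z)$ with $y\neq z$ in $B_M^n$ whenever either (i) or (ii) fails; for sufficiency, to exploit convexity of $M$ together with the strict convexity at the coordinates $|x_i|$ with $i\neq i_0$ in order to force every such decomposition to be trivial.

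For the necessity direction I would argue the contrapositive. If $\sum_{i=1}^n M(|x_i|)<1$, the continuity of $M$ gives an $\varepsilon>0$ with $\sum_iM(|x_i\pm\varepsilon\delta_{ij}|)\le 1$ for some fixed $j$, so that $x\pm\varepsilon e_j\in B_M^n$ and $x=\tfrac12((x+\varepsilon e_j)+(x-\varepsilon e_j))$ is a non-trivial decomposition. If (ii) fails there are distinct indices $i_1,i_2$ with $x_{i_k}\notin\pm\sconv(M)$, meaning each $|x_{i_k}|$ lies in the interior of a maximal interval on which $M$ is affine, with slope $m_k\ge 0$. Setting $\sigma_k=\operatorname{sgn}(x_{i_k})$, I would pick non-zero $\varepsilon_1,\varepsilon_2$ satisfying $m_1\varepsilon_1+m_2\varepsilon_2=0$ and small enough to keep each perturbed coordinate within its affine region, and form $y=x+\sigma_1\varepsilon_1 e_{i_1}+\sigma_2\varepsilon_2 e_{i_2}$ together with $z$ obtained by flipping both signs. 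Since the perturbations move along affine pieces, $\sum_iM(|y_i|)=\sum_iM(|z_i|)=\sum_iM(|x_i|)\le 1$, so $y,z\in B_M^n$ with $y\neq z$ and $x=\tfrac12(y+z)$, contradicting extremality; if one of the slopes vanishes, a perturbation in that coordinate alone already suffices.

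For the sufficiency direction I would assume (i) and (ii) and suppose $x=\tfrac12(y+z)$ with $y,z\in B_M^n$. Combining the coordinatewise triangle inequality $|x_i|\le\tfrac12(|y_i|+|z_i|)$ with convexity of $M$ yields
$$
  M(|x_i|)\le M\!\left(\tfrac{|y_i|+|z_i|}{2}\right)\le \tfrac12 M(|y_i|)+\tfrac12 M(|z_i|)
$$
for every $i$; summing over $i$ and invoking (i) together with $\sum_iM(|y_i|),\sum_iM(|z_i|)\le 1$ forces equality in every step and in every coordinate. For each $i\neq i_0$, strict convexity of $M$ at $|x_i|$ combined with $M(|x_i|)=\tfrac12M(|y_i|)+\tfrac12M(|z_i|)$ and $|x_i|=\tfrac12(|y_i|+|z_i|)$ gives $|y_i|=|z_i|=|x_i|$, and the equality case $|y_i+z_i|=|y_i|+|z_i|$ of the triangle inequality then matches signs and yields $y_i=z_i=x_i$. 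For the remaining index $i_0$, the identities $y_{i_0}+z_{i_0}=2x_{i_0}$ and $M(|y_{i_0}|)=M(|z_{i_0}|)=M(|x_{i_0}|)$ (the latter from $\sum_iM(|y_i|)=\sum_iM(|z_i|)=1$ and the already identified coordinates), together with the monotonicity of $M$, yield $|y_{i_0}|=|z_{i_0}|=|x_{i_0}|$ and hence $y_{i_0}=z_{i_0}=x_{i_0}$ as well.

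The main obstacle lies in the treatment of the exceptional index $i_0$. In the necessity direction one must carefully choose the signs and magnitudes of the perturbations so that the motion is tangent to the level set $\{u:\sum_iM(|u_i|)=\sum_iM(|x_i|)\}$ while remaining inside the relevant affine regions, and the construction degenerates when a slope $m_k$ vanishes, which has to be handled as a separate case. In the sufficiency direction, recovering $|y_{i_0}|=|z_{i_0}|$ from $M(|y_{i_0}|)=M(|z_{i_0}|)=M(|x_{i_0}|)$ relies on $M$ being strictly increasing; otherwise the midpoint identity $|x_{i_0}|=\tfrac12(|y_{i_0}|+|z_{i_0}|)$ together with convexity of $M$ on the affine piece containing $|x_{i_0}|$ must be brought in to exclude a flat zero region.
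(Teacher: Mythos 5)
This lemma is not proved in the paper at all: it is imported verbatim from \cite{W}, and the paper only ever uses the ``only if'' direction (in Proposition~\ref{prop:upper bound average}, to enumerate candidates for the extreme points of $B_{M_{\ell N}}^{nN}$). So there is no in-paper argument to compare yours with, and I can only judge your proof on its own terms. Your overall strategy is the standard one, and the necessity half is essentially sound: if $\sum_i M(|x_i|)<1$ a one-coordinate perturbation works by continuity, and if two coordinates lie interior to affine pieces you cancel the first-order changes $m_1\varepsilon_1+m_2\varepsilon_2=0$ (with the zero-slope and $x_{i_k}=0$ cases treated separately, where $|x_{i_k}\pm\varepsilon|=\varepsilon$ for both signs, so your formula $|x_{i_k}|\pm\varepsilon_k$ needs a small adjustment).

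The sufficiency half, however, has a genuine gap at the exceptional index $i_0$, located exactly where you suspected, and it cannot be repaired the way you propose: under your reading of $\sconv(M)$ (``$t\notin\sconv(M)$ iff $t$ lies in the interior of a maximal interval of affinity'') the implication is simply false once $M$ has a nontrivial zero set. Take $M(t)=\max(t-1,0)^2$ and $x=(2,\tfrac12)\in\R^2$: then $\sum_i M(|x_i|)=1$, the coordinate $2$ is a point of strict convexity, and $\tfrac12$ is the unique coordinate interior to an affine (here: zero) piece, so (i) and (ii) hold with $i_0=2$; yet $x=\tfrac12\bigl((2,\tfrac14)+(2,\tfrac34)\bigr)$ with both summands in $B_M^2$, so $x\notin\ext(B_M^2)$. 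In your argument this is precisely the step where $M(|y_{i_0}|)=M(|z_{i_0}|)=M(|x_{i_0}|)=0$ and $y_{i_0}+z_{i_0}=2x_{i_0}$ fail to force $y_{i_0}=z_{i_0}$; no appeal to convexity on that affine piece can force it, because the perturbation there is genuinely cost-free. The same phenomenon hits the index set $\{i\neq i_0\}$ at a zero coordinate: for $M(t)=(t-1)_+$ the point $(2,0)$ satisfies (i), and $0$ is vacuously a point of strict convexity under the ``interior'' definition, yet $(2,0)=\tfrac12\bigl((2,1)+(2,-1)\bigr)$. To make the statement and your proof match, you must use the convention of \cite{W} under which a value $t$ with $M(t+\delta)=0$ for some $\delta>0$ (in particular $t=0$ when $M$ vanishes near $0$) never lies in $\pm\sconv(M)$, and you must observe that such a coordinate can never be the admissible exception: the single permitted $i_0$ must sit on an affine piece of strictly positive slope, where the key structural fact --- a convex $M$ with $M(0)=0$ is constant only where it vanishes, hence strictly increasing on $\{M>0\}$ --- lets you conclude $|y_{i_0}|=|z_{i_0}|=|x_{i_0}|$ from $M(|y_{i_0}|)=M(|z_{i_0}|)=M(|x_{i_0}|)>0$, and also lets you upgrade $M\bigl(\tfrac{|y_i|+|z_i|}{2}\bigr)=M(|x_i|)$ to $\tfrac{|y_i|+|z_i|}{2}=|x_i|$ for $i\neq i_0$. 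As written, the ``if'' direction does not go through; pin down the definition of $\sconv$ and add this case distinction.
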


For a detailed and thorough introduction to Orlicz spaces we refer the reader to \cite{RR} or \cite{LT77}.

\section{The lower bound} \label{sec: lower bound}

In this section we will prove the lower bound in \eqref{eq:main result firs part}. We begin by
recalling some notation and assumptions given in Theorem \ref{thm:main}.
Let $a\in\R^{n\times N}$, $I=\{1,\ldots,n\}$, $J=\{1,\ldots,N\}$, and $G$ be a collection of
maps from $I$ to $J$. The matrix $a$ will be fixed throughout the entire section.
By $\Pro$ we denote the normalized counting measure on $G$, i.e., $\Pro(E) = |E|/|G|$ for $E\subset G$. We assume a uniform distribution of the random variable $g \mapsto g(i)$ for each $i\in I$, i.e.,
\begin{equation*}
\Pro(g(i)=j)=\frac{1}{N},\qquad i\in I,j\in J.
\end{equation*}
We assume for all different pairs $(i_1,j_1),(i_2,j_2)\in I\times J$ that
\begin{equation*} 
\Pro(g(i_1)=j_1, g(i_2)=j_2 ) \leq \frac{C_G}{N^2},
\end{equation*}
with a constant $C_G\geq 1$ that depends on $G$, but not on $n$ or $N$.  

Without loss of generality, we will assume that $a$ has only non-negative entries. It is enough to show the lower estimate in \eqref{eq:main result firs part} for matrices $a$ that consist of only the $\ell N$ largest entries, while all others are equal to zero. This is because if we change any entry $a_{i_0j_0}\leq s(\ell N+1)$ by setting $a_{i_0j_0}=0$, the left hand side in \eqref{eq:main result firs part} remains the same, while $\kmax\limits_{1\leq i \leq n} |a_{ig(i)}|$ does not increase for any $g\in G$. 

\subsection{The key ingredients}

We will now introduce a bijective function $h$ that determines the ordering of the values of $a$. The crucial point is that this function does not depend on the actual values of the matrix, but merely on their relative size. So let $h:\{1,\dots,n\cdot N\}\rightarrow I\times J$ be a bijective function satisfying
\begin{equation}\label{eq:condition on b}
\begin{aligned}
a(h(j)) & \geq a(h(j+1)),& 1 &\leq j \leq \ell N,\\
a(h(j))& =0, & \ell N +1& \leq j \leq nN.
\end{aligned}
\end{equation}
Observe that there is possibly more than one choice for $h$, since some of the entries of the matrix $a$ might have the same value.  

For all $j\in\N$, $1\leq j \leq n\cdot N$, define the random variable
 	\[
 	Y_j:G\to \{0,1\},\qquad
 	Y_j(g)= \begin{cases}
 	1, & \text{if }h(j)\in g, \\
 	0, & \text{if }h(j)\notin g,
 	\end{cases}
 	\]
and given $m\in\N$, $1\leq m \leq n\cdot N$, let
\[
X_m:G \to \{0,1,\dots, n\}, \qquad
 X_m(g):= \sum_{j=1}^mY_j(g)=|h(\{1,\dots,m\})\cap g|,
\]
where we identify $g$ with its graph $\{(i,g(i)):i\in I \}$. $X_m$ counts the number of elements in the path $\{(i,g(i)):i\in I \}$ that intersect with the positions of the $m$ largest entries of $a$. As we will see in Subsection \ref{subsec:reduction}, the random variables $X_m$ are strongly related to order statistics.

In Lemma \ref{lem:keq1}, Lemma \ref{lem:Xm}, and Lemma \ref{lem:XmXln}, we investigate crucial properties of the distribution function of $X_m$.  

\begin{lemma}\label{lem:keq1}
	For all $m\in\N$, $1\leq m \leq n\cdot N$, we have
	\begin{equation}\label{eq:Pinduction}
	\Pro(X_m\geq 1)\geq\frac{m}{N}\Big(1-C_G\frac{m-1}{2N}\Big).
	\end{equation}
	In particular, 
	\[
	\Pro(X_{\lceil N/C_G \rceil}\geq 1)\geq 
	\frac{1}{2C_G}.
	\]
\end{lemma}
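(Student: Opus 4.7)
My plan is to rewrite the event $\{X_m \geq 1\}$ as the union $\bigcup_{j=1}^m \{Y_j = 1\}$ and then apply the second-order Bonferroni inequality, which gives the lower bound
\[
\Pro\Bigl(\bigcup_{j=1}^m \{Y_j=1\}\Bigr) \;\geq\; \sum_{j=1}^m \Pro(Y_j=1) \;-\; \sum_{1\leq j_1<j_2\leq m} \Pro(Y_{j_1}=1,\,Y_{j_2}=1).
\]
The two ingredients I need are then exactly hypotheses (i) and (ii) of Theorem \ref{thm:main}. If $h(j)=(i,j')$, then $\{Y_j=1\}=\{g(i)=j'\}$, so (i) yields $\Pro(Y_j=1)=1/N$. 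Similarly, for $j_1\neq j_2$ the pairs $h(j_1)$ and $h(j_2)$ are distinct (since $h$ is a bijection), so (ii) gives $\Pro(Y_{j_1}=1,Y_{j_2}=1)\leq C_G/N^2$. Plugging in, the Bonferroni estimate collapses to
\[
\Pro(X_m\geq 1) \;\geq\; \frac{m}{N} - \binom{m}{2}\frac{C_G}{N^2} \;=\; \frac{m}{N}\Bigl(1-\frac{C_G(m-1)}{2N}\Bigr),
\]
which is exactly \eqref{eq:Pinduction}.

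As an alternative (suggested by the label \texttt{eq:Pinduction}), I would induct on $m$: the base case $m=1$ reduces to $\Pro(Y_1=1)=1/N$, and for the step I would decompose
\[
\Pro(X_{m+1}\geq 1) \;\geq\; \Pro(X_m\geq 1) + \Pro(Y_{m+1}=1) - \Pro(X_m\geq 1,\,Y_{m+1}=1),
\]
bounding the last term by $\sum_{j=1}^m \Pro(Y_j=1,Y_{m+1}=1)\leq mC_G/N^2$. Combining with the inductive hypothesis and simplifying gives
\[
\frac{m}{N}\Bigl(1-\frac{C_G(m-1)}{2N}\Bigr)+\frac{1}{N}-\frac{mC_G}{N^2} \;=\; \frac{m+1}{N}\Bigl(1-\frac{C_Gm}{2N}\Bigr),
\]
exactly what is needed. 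Either route is routine; the only care is to recognize that hypothesis (ii) is stated for \emph{different} pairs, which is automatic here because $h$ is a bijection.

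For the ``In particular'' assertion, set $m=\lceil N/C_G\rceil$. Then $m-1 < N/C_G$, so $C_G(m-1)/(2N)<1/2$, and consequently
\[
\Pro(X_m\geq 1) \;\geq\; \frac{m}{N}\cdot\frac{1}{2} \;\geq\; \frac{1}{C_G}\cdot\frac{1}{2} \;=\; \frac{1}{2C_G},
\]
using $m/N\geq 1/C_G$ in the second inequality.

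I do not foresee a real obstacle here. The only thing to be slightly careful about is that Bonferroni's lower bound (truncation of inclusion-exclusion after an even number of terms) is valid without any further assumption, so no union-bound-type inequality is being abused; the proof is essentially a second-moment/union-bound computation tailored to the two moment estimates provided by (i) and (ii).
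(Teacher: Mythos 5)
Your primary argument (rewriting $\{X_m\geq 1\}$ as $\bigcup_{j=1}^m\{Y_j=1\}$ and applying the second-order Bonferroni bound with hypotheses (i) and (ii)) is exactly the paper's proof, and your handling of the ``in particular'' case is correct as well. No issues.
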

\begin{proof}
By using the inclusion-exclusion principle, we obtain
\begin{align*}
\Pro(X_m \geq 1) & = \Pro \Big( \bigcup_{j=1}^m \{g\in G: Y_j(g)=1\}\Big) \\
& \geq \sum_{j=1}^m \Pro(Y_j=1) - \sum_{i<j}\Pro(Y_i=1,Y_j=1) \\
& \geq \frac{m}{N} - \frac{m(m-1)C_G}{2N^2} \\
& = \frac{m}{N} \Big( 1- C_G\frac{m-1}{2N}\Big),
\end{align*}
where the latter inequality is a direct consequence of conditions \eqref{eq:condition 1} and \eqref{eq:condition 2} in Theorem \ref{thm:main}. 
\end{proof}

\begin{lemma} \label{lem:Xm}
	For all $m\in\N$, $1\leq m \leq n\cdot N$, and all $\theta\in(0,1)$, we have
	\begin{equation}\label{eq:paley estimate}
	 \Pro\Big(X_m\geq \theta\cdot \frac{m}{N}\Big) \geq (1-\theta)^2\frac{m}{N+m\cdot C_G}.
	\end{equation}
	
\end{lemma}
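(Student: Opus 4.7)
The plan is to apply the Paley--Zygmund inequality \eqref{ine:paley zygmund} directly to the non-negative random variable $X_m = \sum_{j=1}^m Y_j$. This reduces the lemma to computing $\E X_m$ exactly and bounding $\E X_m^2$ from above, after which the bound will come out by a one-line algebraic simplification.

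For the first moment, I observe that since $h$ is a bijection, each $h(j)$ equals some pair $(i,j') \in I\times J$, so $Y_j$ is the indicator of the event $\{g(i) = j'\}$, and assumption (i) of Theorem~\ref{thm:main} yields $\E Y_j = 1/N$. Linearity then gives $\E X_m = m/N$.

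For the second moment, I would expand $X_m^2 = \sum_{i,j=1}^m Y_i Y_j$. The $m$ diagonal terms each contribute $\E Y_j^2 = \E Y_j = 1/N$ since $Y_j \in \{0,1\}$, giving a total of $m/N$. For the $m(m-1)$ off-diagonal terms with $i \neq j$, the points $h(i)$ and $h(j)$ are distinct pairs in $I\times J$ (here is the only place bijectivity of $h$ enters), so assumption (ii) yields $\E Y_i Y_j \leq C_G/N^2$. Summing produces $\E X_m^2 \leq m/N + m(m-1) C_G/N^2 \leq m/N + m^2 C_G/N^2$.

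Plugging these two inputs into \eqref{ine:paley zygmund} with $Z = X_m$ immediately gives
\[
\Pro\Big(X_m \geq \theta \cdot \frac{m}{N}\Big) \geq (1-\theta)^2 \frac{(m/N)^2}{m/N + m^2 C_G/N^2} = (1-\theta)^2 \frac{m}{N + m C_G},
\]
which is exactly the claimed estimate. I do not anticipate any real obstacle here; the argument is a textbook second-moment computation, and the only modeling point worth flagging is that the bijectivity of $h$ built into \eqref{eq:condition on b} is what guarantees that the cross terms in the second moment are controlled by hypothesis (ii) rather than by the diagonal bound.
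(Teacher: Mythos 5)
Your proposal is correct and follows essentially the same route as the paper: apply Paley--Zygmund to $X_m$, use hypothesis (i) to get $\E X_m = m/N$, use hypothesis (ii) on the off-diagonal terms to get $\E X_m^2 \leq m/N + C_G m(m-1)/N^2$, and simplify. The only (harmless) cosmetic difference is that you relax $m(m-1)$ to $m^2$ before substituting, whereas the paper substitutes the sharper bound directly; both yield the stated estimate.
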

\begin{proof}
 	The result follows as a consequence of Paley-Zygmund's inequality (cf. \eqref{ine:paley zygmund}). Therefore, we need to compute $\E X_m$ and $\E X_m^2$. Note that $\E Y_j=\Pro(Y_j=1)=1/N$ and thus $\E X_m=\sum_{j=1}^m Y_j=m/N$.
 	Moreover, since $Y_j=Y_j^2$, we have
 	\begin{align*}
 	\E X_m^2 &=\sum_{i,j=1}^m \E Y_i Y_j = \sum_{j=1}^m \E Y_j + \sum_{i\neq j} \E Y_i Y_j \\
 	& \leq \frac{m}{N}+ C_G\frac{m(m-1)}{N^2},
 	\end{align*}
 	where the latter inequality is a direct consequence of conditions \eqref{eq:condition 1} and \eqref{eq:condition 2} in Theorem \ref{thm:main}.
 	Inserting those estimates in \eqref{ine:paley zygmund}, we obtain the result. 
\end{proof}

\begin{lemma}\label{lem:XmXln}
	For all $m\in\N$, $1\leq m \leq n\cdot N$, we have
	\[
		\Pro(X_m\geq 1)\geq \min\left\{\frac{m}{2N},\frac{1}{2C_G}\right\}\Pro(X_{\ell N}\geq 1),
	\]
	and for all $k,m\in\N$ with $2kN\leq m\leq n N$
	\begin{equation}\label{eq:Xmk}
		\Pro(X_m\geq k)\geq \frac{\Pro(X_{\ell N}\geq k)}{2+4 C_G}.
 	\end{equation}
\end{lemma}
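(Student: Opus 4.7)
The plan is to observe that in both inequalities we have $\Pro(X_{\ell N}\geq k)\leq 1$, so it suffices to establish the absolute lower bounds $\Pro(X_m\geq 1)\geq\min\{m/(2N),1/(2C_G)\}$ and $\Pro(X_m\geq k)\geq 1/(2+4C_G)$ respectively, and then simply multiply by the trivial upper bound $\Pro(X_{\ell N}\geq k)\leq 1$.

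For the first inequality, I would split the argument into two cases according to the size of $m$. If $m\leq N/C_G$, then $C_G(m-1)/(2N)\leq 1/2$, so Lemma~\ref{lem:keq1} directly gives
\[
\Pro(X_m\geq 1)\geq \frac{m}{N}\Big(1-C_G\frac{m-1}{2N}\Big)\geq \frac{m}{2N},
\]
and in this regime the minimum equals $m/(2N)$. If on the other hand $m\geq \lceil N/C_G\rceil$, the monotonicity $X_m\geq X_{\lceil N/C_G\rceil}$ pointwise together with the second assertion of Lemma~\ref{lem:keq1} yields $\Pro(X_m\geq 1)\geq 1/(2C_G)$, and in this regime the minimum equals $1/(2C_G)$. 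Combining both cases and multiplying by $\Pro(X_{\ell N}\geq 1)\leq 1$ proves the first inequality.

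For the second inequality, assume $2kN\leq m\leq nN$, so that $m/(2N)\geq k$ and in particular $m\geq 2N$. Monotonicity gives
\[
\Pro(X_m\geq k)\geq \Pro\Big(X_m\geq \tfrac12\cdot\tfrac{m}{N}\Big),
\]
and Lemma~\ref{lem:Xm} applied with $\theta=1/2$ bounds the right-hand side below by $\frac14\cdot\frac{m}{N+m C_G}$. Since $m\geq 2N$, we have $N/m\leq 1/2$ and therefore
\[
\frac{m}{N+m C_G}=\frac{1}{N/m+C_G}\geq \frac{2}{1+2C_G},
\]
which gives $\Pro(X_m\geq k)\geq 1/(2+4C_G)$. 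Multiplying by $\Pro(X_{\ell N}\geq k)\leq 1$ finishes the proof.

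The only point that requires any care is verifying the hypothesis of the Paley--Zygmund step for the second inequality, namely that the threshold $k$ lies below $\theta\cdot\E X_m=m/(2N)$; this is precisely what the assumption $m\geq 2kN$ encodes, so there is no real obstacle here. The argument is essentially a direct packaging of Lemma~\ref{lem:keq1} and Lemma~\ref{lem:Xm} together with the trivial bound $\Pro\leq 1$.
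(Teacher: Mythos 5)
Your proof is correct and follows essentially the same route as the paper: Lemma~\ref{lem:keq1} with the same two-case split on $m$ versus $N/C_G$ for the first inequality, and Lemma~\ref{lem:Xm} with $\theta=1/2$ plus the trivial bound $\Pro(X_{\ell N}\geq k)\leq 1$ for the second. The only cosmetic difference is that the paper first passes to $\Pro(X_{2kN}\geq k)$ by monotonicity and applies Paley--Zygmund at $m=2kN$, whereas you apply it at $m$ directly and then use $m\geq 2N$; both give the constant $1/(2+4C_G)$.
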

\begin{proof}
	Let $1\leq m\leq n\cdot N$. If $m\leq N/C_G$, Lemma \ref{lem:keq1} implies 
	\[
	  \Pro(X_m\geq 1)\geq \frac{m}{2N}\geq \frac{m}{2N}\cdot\Pro(X_{\ell N}\geq 1)=\min\left\{\frac{m}{2N},\frac{1}{2C_G}\right\}\Pro(X_{\ell N}\geq 1).
	\]
	On the other hand, if $m\geq N/C_G$, Lemma \ref{lem:keq1} implies
	\begin{align*}
	  \Pro(X_m\geq 1) &\geq \Pro(X_{\lceil N/C_G\rceil}\geq 1)
	   \geq \frac{1}{2C_G} \\
	  & \geq \frac{1}{2C_G}\Pro(X_{\ell N}\geq 1)  = \min\left\{\frac{m}{2N},\frac{1}{2C_G}\right\}\Pro(X_{\ell N}\geq 1).
	\end{align*}
	Now we prove \eqref{eq:Xmk}. Let $k\leq n/2$ and $m$ such that $2kN\leq m\leq n\cdot N$. Then Lemma~\ref{lem:Xm} with $\theta=1/2$ implies
	\[
	  \Pro(X_m\geq k)\geq \Pro(X_{2kN}\geq k)\geq \frac{1}{2+4C_G}\geq \frac{1}{2+4C_G}\Pro(X_{\ell N}\geq k). \qedhere
	\]
\end{proof}

\subsection{Reduction to two valued matrices}\label{subsec:reduction}

We will now reduce the problem of estimating the expected value of averages of order statistics of general matrices to matrices only taking one value different from zero. To do so, we need some more definitions.  

Let $\mathcal A_h$ be the collection of all non-negative real $n\times N$ matrices $b$ that satisfy 
\begin{equation}
\begin{aligned}
b(h(j)) & \geq b(h(j+1)),& 1 &\leq j \leq \ell N,\\
b(h(j))& =0, & \ell N +1& \leq j \leq nN.
\end{aligned}
\end{equation}
For every $b\in \mathcal A_h$, we set
\[
\widetilde{b}(h(j)):=\Big(\frac{1}{\ell N}\sum_{i=1}^{\ell N} b(h(i))\Big) \cdot \mathbbm 1_{\{1,\dots,\ell N\}} (j),\qquad 1\leq j\leq n\cdot N,
\]
as the matrix that contains the averaged entries of $b$. Note that $\widetilde b \in\mathcal A_h$. Moreover, we define 
\[
a_m(h(k)):=\mathbbm 1_{\{1,\dots, m\}}(k), \qquad 1\leq m \leq n\cdot N.
\]
Observe that $a_m\in \mathcal A_h$ for all $1\leq m \leq n\cdot N$. For $b\in\mathcal A_h$ and $g\in G$ we put
\[
S_k(b)(g):= \kmax_{1\leq i \leq n} b_{ig(i)}\qquad\text{and}\qquad S(b)(g):=\sum_{k=1}^\ell S_k(b)(g).
\]
	
\begin{lemma}\label{thm:am}
Let $m\in\N$, $1\leq m \leq \ell N$. Then we have
\[
\E S(\widetilde{a_m})\leq (8+16C_G)\cdot \E S(a_m).
\]
\end{lemma}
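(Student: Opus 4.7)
The plan is to first unpack both sides of the desired inequality into expressions involving the counting random variables $X_m$ and $X_{\ell N}$ introduced earlier. Because $\widetilde{a_m}$ takes only the constant value $m/(\ell N)$ on the positions $h(1),\dots,h(\ell N)$ and vanishes elsewhere, the values $\widetilde{a_m}_{i\,g(i)}$ along the graph of $g$ consist of $X_{\ell N}(g)$ copies of $m/(\ell N)$ and $n-X_{\ell N}(g)$ zeros. Likewise $a_m$ is the $0/1$ indicator of the first $m$ positions under $h$, so $(a_m)_{i\,g(i)}$ consists of $X_m(g)$ ones and $n - X_m(g)$ zeros. Summing the top $\ell$ entries in each case and taking expectations yields the clean identities
\[
\E S(\widetilde{a_m}) \;=\; \frac{m}{\ell N}\,\E\min(X_{\ell N},\ell)
\qquad\text{and}\qquad
\E S(a_m) \;=\; \E\min(X_m,\ell).
\]

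I will then bound the left-hand side using the deterministic inequality $\min(X_{\ell N},\ell)\le \ell$, which immediately gives $\E S(\widetilde{a_m})\le m/N$. The lemma therefore reduces to the pointwise lower bound $\E\min(X_m,\ell)\ge c(C_G)\,m/N$ for a suitable constant, which I will establish by splitting into two regimes according to the size of $m$ relative to $N/C_G$.

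In the small regime $m\le N/C_G$, Lemma \ref{lem:keq1} applied directly gives
\[
\Pro(X_m\ge 1)\;\ge\;\frac{m}{N}\Bigl(1-\frac{C_G(m-1)}{2N}\Bigr)\;\ge\;\frac{m}{2N},
\]
because the correction factor is at least $1/2$ under the constraint on $m$. Hence $\E\min(X_m,\ell)\ge \Pro(X_m\ge 1)\ge m/(2N)$, which yields $\E S(\widetilde{a_m})\le 2\,\E S(a_m)$. In the complementary large regime $m\ge N/C_G$, I apply Lemma \ref{lem:Xm} with $\theta=1/2$; since $m\ge N/C_G$ means $N+mC_G\le 2mC_G$, this gives $\Pro(X_m\ge m/(2N))\ge 1/(8C_G)$. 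The hypothesis $m\le\ell N$ ensures $m/(2N)\le \ell$, so $\min(X_m,\ell)\ge (m/(2N))\mathbbm{1}\{X_m\ge m/(2N)\}$, and taking expectations yields $\E\min(X_m,\ell)\ge m/(16NC_G)$, hence $\E S(\widetilde{a_m})\le 16C_G\,\E S(a_m)$. Combining both cases and using $C_G\ge 1$, one obtains $\E S(\widetilde{a_m})\le \max(2,16C_G)\E S(a_m)\le (8+16C_G)\E S(a_m)$, as claimed.

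The main obstacle is the regime split at $m\sim N/C_G$: Lemma \ref{lem:keq1} loses its effectiveness once the second-moment correction becomes comparable to the first-moment term, while the Paley--Zygmund based Lemma \ref{lem:Xm} only produces a useful constant once $mC_G$ dominates $N$. The bookkeeping to make sure the thresholds line up, and that the hypothesis $m\le \ell N$ is correctly used to keep $m/(2N)$ below $\ell$ in the second regime, is the only delicate piece of the argument.
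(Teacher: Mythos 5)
Your proof is correct. The identities you start from, namely $S(a_m)(g)=\min(X_m(g),\ell)$ and $S(\widetilde{a_m})(g)=\tfrac{m}{\ell N}\min(X_{\ell N}(g),\ell)$, are exactly the ones the paper derives (written there as $\E S(a_m)=\sum_{k=1}^\ell \Pro(X_m\ge k)$ and $\E S(\widetilde{a_m})=\tfrac{m}{\ell N}\sum_{k=1}^\ell\Pro(X_{\ell N}\ge k)$), but from that point on your route diverges in a genuine way. The paper keeps the tail probabilities of $X_{\ell N}$ in play and compares them level by level with those of $X_m$ via Lemma \ref{lem:XmXln}, splitting at $m=2N$ and, in the large-$m$ case, summing the bound \eqref{eq:Xmk} over $k\le t\approx m/(2N)$ after a monotonicity argument. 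You instead discard $\Pro(X_{\ell N}\ge k)$ entirely through the trivial bound $\min(X_{\ell N},\ell)\le\ell$, reducing everything to the single absolute lower bound $\E\min(X_m,\ell)\ge c(C_G)\,m/N$, obtained by one application of Lemma \ref{lem:keq1} when $m\le N/C_G$ and one application of Lemma \ref{lem:Xm} (Paley--Zygmund at level $m/(2N)$, where the hypothesis $m\le\ell N$ is indeed what keeps $m/(2N)\le\ell$) when $m\ge N/C_G$. This bypasses Lemma \ref{lem:XmXln} altogether and is arguably cleaner; the trade-off is that the paper's comparison of the two distribution functions is the form that is reused elsewhere, whereas your argument proves only what this lemma needs. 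The bookkeeping checks out: $\max(2,16C_G)\le 8+16C_G$, so you even land within the paper's stated constant.
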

\begin{proof}
Observe that for every integer $k$ with $1\leq k\leq \ell$,
\begin{equation*}
\begin{aligned}
\E S_k(a_m)&=\Pro(S_k(a_m)=1)=\Pro(X_m\geq k),\\
\E S_k(\widetilde{a_m})&=\frac{m}{\ell N}\cdot\Pro\Big(S_k(\widetilde{ a_m})=\frac{m}{\ell N}\Big)= \frac{m}{\ell N}\cdot\Pro(X_{\ell N}\geq k).
\end{aligned}
\end{equation*}
As a consequence,
\begin{equation*}
\begin{aligned}
\E S(\widetilde{a_m})=\frac{m}{\ell N}\sum_{k=1}^\ell \Pro(X_{\ell N}\geq k)
\qquad\text{and}\qquad  \E S(a_m)=\sum_{k=1}^\ell \Pro(X_m\geq k).
\end{aligned}
\end{equation*}
Thus, in order to prove the lemma, it is enough to show that
\[
\frac{m}{\ell N}\sum_{k=1}^\ell \Pro(X_{\ell N}\geq k) \leq (8+16C_G) \cdot \sum_{k=1}^\ell \Pro(X_m\geq k)
\]
for $1\leq m\leq \ell N$.
First, we assume $m\leq 2N$. Then, Lemma \ref{lem:XmXln} implies 
\begin{align*}
\frac{m}{\ell N}\sum_{k=1}^{\ell}\Pro(X_{\ell N}\geq k) 
& \leq \frac{m}{N}\cdot \Pro(X_{\ell N}\geq 1) \\
& \leq \frac{2}{N}\max\{N,m\cdot C_G\} \Pro(X_m\geq 1) \\
&\leq 4C_G\cdot \sum_{k=1}^\ell \Pro(X_m\geq k)
\end{align*}
i.e., the assertion of the lemma for $m\leq 2N$.

Now, let $m\geq 2N+1$ and choose the integer $t\geq 1$ such that $2tN+1\leq m\leq 2(t+1)N$. The sequence $k\mapsto\Pro(X_{\ell N} \geq k)$ is decreasing, hence, noting that $t\leq \ell$,
\[
\frac{m}{\ell N}\sum_{k=1}^\ell \Pro(X_{\ell N}\geq k)\leq \frac{m}{tN}\sum_{k=1}^t \Pro(X_{\ell N}\geq k). 
\]
Then, estimate \eqref{eq:Xmk} of Lemma \ref{lem:XmXln} implies
\begin{align*}
\frac{m}{\ell N}\sum_{k=1}^\ell \Pro(X_{\ell N}\geq k)
& \leq \frac{m}{tN}\sum_{k=1}^t  (2+4C_G)\cdot\Pro(X_m\geq k) \\
&\leq \frac{(4+8C_G)(t+1)}{t}\sum_{k=1}^t \Pro(X_m\geq k) \\
&\leq (8+16C_G)\cdot \sum_{k=1}^\ell \Pro(X_m\geq k)
\end{align*}
and the result follows.
\end{proof}

\begin{lemma}\label{thm: estimate a and tilde a general case}
 We have
 \[
\E S(\widetilde a)\leq (8+16 C_G)\cdot \E S(a).
\] 
\end{lemma}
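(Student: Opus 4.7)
The plan is to realize $a$ as a non-negative superposition of the two-valued matrices $a_m$ treated in Lemma \ref{thm:am} and exploit that on $\mathcal A_h$ the order statistic $b\mapsto S_k(b)(g)$ is linear with respect to such superpositions. Since $a \in \mathcal A_h$ means $a(h(j))$ is decreasing in $j$ with $a(h(\ell N+1))=0$, setting $c_m := a(h(m)) - a(h(m+1)) \geq 0$ for $1 \leq m \leq \ell N$ produces a telescoping representation
$$a(h(k)) = \sum_{m=k}^{\ell N} c_m = \sum_{m=1}^{\ell N} c_m\, a_m(h(k)), \qquad 1 \leq k \leq nN,$$
so $a = \sum_m c_m a_m$. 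Linearity of the averaging operator yields the companion identity $\widetilde a = \sum_m c_m \widetilde{a_m}$.

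The crux is to establish, for every $g \in G$ and every $1 \leq k \leq \ell$,
$$S_k(a)(g) = \sum_{m=1}^{\ell N} c_m\, S_k(a_m)(g), \qquad S_k(\widetilde a)(g) = \sum_{m=1}^{\ell N} c_m\, S_k(\widetilde{a_m})(g).$$
This linearity fails for arbitrary matrices but holds on $\mathcal A_h$ because all matrices involved share a common ordering of their non-zero entries. Concretely, fix $g$ and let $j_1 < \cdots < j_q$ be the indices with $h(j_r) \in g$ and $j_r \leq \ell N$, so that $q = X_{\ell N}(g)$. The monotonicity of $a\circ h$ forces the $k$-th largest value of $(a_{i,g(i)})_i$ to equal $a(h(j_k))$ when $q \geq k$, and $0$ otherwise. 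Similarly, $S_k(a_m)(g) = \mathbbm 1\{j_k \leq m\}$ when $q \geq k$, so the telescoping $\sum_{m \geq j_k} c_m = a(h(j_k))$ gives the first identity. The second is immediate once one observes that both $\widetilde a$ and each $\widetilde{a_m}$ are scalar multiples of $a_{\ell N}$, namely $\widetilde{a_m} = \tfrac{m}{\ell N} a_{\ell N}$ and $\widetilde a = \bigl(\tfrac{1}{\ell N}\sum_m c_m m\bigr) a_{\ell N}$, so both sides reduce to $\bigl(\tfrac{1}{\ell N}\sum_m c_m m\bigr) S_k(a_{\ell N})(g)$.

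Summing over $k \in \{1,\ldots,\ell\}$ and taking expectations converts these pointwise identities into $\E S(a) = \sum_m c_m\, \E S(a_m)$ and $\E S(\widetilde a) = \sum_m c_m\, \E S(\widetilde{a_m})$. Applying Lemma \ref{thm:am} term by term then delivers
$$\E S(\widetilde a) \;=\; \sum_{m=1}^{\ell N} c_m\, \E S(\widetilde{a_m}) \;\leq\; (8+16C_G)\sum_{m=1}^{\ell N} c_m\, \E S(a_m) \;=\; (8+16C_G)\, \E S(a),$$
which is the desired bound. The only delicate step is the pointwise linearity identity for $S_k$; this is not a property of general matrices but a consequence of the shared $h$-ordering, and once that observation is made the argument is purely a bookkeeping computation about ordered intersections of the "path" $g$ with the list $h(1),h(2),\dots$.
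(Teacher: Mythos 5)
Your proof is correct and follows essentially the same route as the paper: the paper writes $\E S(b)=\sum_{j=1}^{\ell N} f(j)\,b(h(j))$ for $b\in\mathcal A_h$ and then performs the Abel summation that your telescoping decomposition $a=\sum_m c_m a_m$ makes explicit, so both arguments reduce the claim to Lemma~\ref{thm:am} applied for each $m=1,\dots,\ell N$ together with $c_m\geq 0$. The pointwise linearity of $S_k$ along this conic combination, which you verify via the shared $h$-ordering, is the same observation the paper encodes in the coefficients $f(j)$.
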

\begin{proof}
Recall that $X_j(g) = |h(\{1,\dots,j\})\cap g|$. Hence, for all $b\in\mathcal A_h$, 
\begin{align*}
\E S(b) & = \sum_{k=1}^{\ell}\sum_{j=1}^{\ell N} b(h(j))\cdot \Pro(\{g:X_{j-1}(g)=k-1,h(j)\in g \}).
\end{align*}
Defining
\[
f(j):= \sum_{k=1}^{\ell} \Pro(\{g:X_{j-1}(g)=k-1,h(j)\in g\}), \qquad 1\leq j \leq \ell N,
\]
we can write
\[
\E S(b) = \sum_{j=1}^{\ell N}f(j)b(h(j)).
\]
Since $a,\widetilde a\in\mathcal A_h$, $a(h(j))=s_a(j)$ and $\widetilde a(h(j)) = (\ell N)^{-1}\sum_{i=1}^{\ell N}s_a(i)$ for all $j \leq \ell N$, we obtain
\begin{equation*}\label{eq:expectation S with f and g}
\E S(a)=\sum_{j=1}^{\ell N} f(j) s_a(j) \qquad \text{and}\qquad \E S(\widetilde{a})= \sum_{j=1}^{\ell N } \widetilde f(j)s_a(j),
\end{equation*}
where for all $1\leq j \leq \ell N$
\[
\widetilde f(j) = \frac{1}{\ell N} \sum_{i=1}^{\ell N} f(i).
\]
Note that the functions $f$ and $\widetilde f$ only depend on $h$, i.e., only on the positions of the entries in the matrix and not on their values.
Since $a_m(h(j)) = 1$ for $j \leq m$ (zero otherwise) and $a_m,\widetilde{a_m}\in\mathcal A_h$, we have
\[
\E S(a_m)=\sum_{j=1}^m f(j) \qquad\text{and}\qquad \E S(\widetilde{a_m})=\sum_{j=1}^m \widetilde f(j).
\]
Now we conclude with $C=8+16C_G$ that
\begin{align*}
  C\E S(a)- \E S(\widetilde a)&= s_a(1)[Cf(1)-\widetilde f(1)]+\sum_{j=2}^{\ell N} [Cf(j)-\widetilde f(j)] s_a(j) \\
  &\geq s_a(2) \sum_{j=1}^2 [C f(j)-\widetilde f(j)]+ \sum_{j=3}^{\ell N}[Cf(j)-\widetilde f(j)] s_a(j) 
\end{align*}
where we used Lemma \ref{thm:am} for $m=1$.
Continuing in this fashion and using Lemma \ref{thm:am} for $m=2,\ldots,\ell N$, we obtain
\begin{equation*}
  C\E S(a)- \E S(\widetilde a) \geq 0.
  \qedhere
\end{equation*}
\end{proof}

\subsection{Conclusion}\label{subsec:conclusion}

As we have seen, we can reduce the case of general $a$ to multiples of matrices only taking values zero and one. Before we finally prove the lower bound in the main theorem, we will need another simple lemma.

\begin{lemma}\label{prop:lower estimate k-max}
	Let $b\in \mathcal A_h$ be an $(n\times N)$-matrix consisting of $\ell N$ ones and $(n-\ell)N$ zeros. Then, for all $1\leq k\leq \ell/2$, 
	$$
	\E \kmax_{1\leq i \leq n} b_{ig(i)} \geq \frac{1}{2+4C_G}. 
	$$
\end{lemma}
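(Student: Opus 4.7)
The plan is to observe that for a matrix $b$ taking only the values $0$ and $1$, with support exactly at the $\ell N$ positions $h(1),\dots,h(\ell N)$, the $k$-max itself is a $0/1$-valued random variable, so the expectation collapses to a single probability. Explicitly, for each $g\in G$ the $k$-th largest among $b_{1g(1)},\dots,b_{ng(n)}$ equals $1$ iff at least $k$ of the pairs $(i,g(i))$ lie in $h(\{1,\dots,\ell N\})$, which is precisely the event $\{X_{\ell N}\geq k\}$. Consequently
\[
\E\kmax_{1\leq i\leq n}b_{ig(i)}=\Pro(X_{\ell N}\geq k),
\]
and the statement reduces to proving $\Pro(X_{\ell N}\geq k)\geq 1/(2+4C_G)$ for $1\leq k\leq \ell/2$.

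To establish this probability estimate I would combine the obvious pointwise monotonicity $X_m\leq X_{m'}$ for $m\leq m'$ with Lemma \ref{lem:Xm}. Since the hypothesis $k\leq \ell/2$ gives $2kN\leq \ell N$, decreasing the index from $\ell N$ down to $2kN$ only decreases the probability, so $\Pro(X_{\ell N}\geq k)\geq \Pro(X_{2kN}\geq k)$. Then applying Lemma \ref{lem:Xm} with $m=2kN$ and $\theta=1/2$ yields
\[
\Pro(X_{2kN}\geq k)\geq \frac{1}{4}\cdot\frac{2kN}{N+2kN\cdot C_G}=\frac{k}{2(1+2kC_G)}.
\]
Since $k\mapsto k/(1+2kC_G)$ is increasing on $[1,\infty)$ (its derivative equals $1/(1+2kC_G)^2>0$), the right-hand side is minimised at $k=1$, producing the uniform lower bound $1/(2+4C_G)$.

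I do not expect a real obstacle in any of these steps; the only conceptually delicate move is to resist the temptation to apply Lemma \ref{lem:Xm} directly at $m=\ell N$ (which would produce a bound depending on both $\ell$ and $k$), and instead first shrink $m$ down to $2kN$. This choice is what makes the Paley--Zygmund threshold $\theta m/N = k$ match exactly the $k$-max being bounded, and is what renders the final estimate independent of $k$ and $\ell$.
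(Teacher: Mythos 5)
Your proposal is correct and follows essentially the same route as the paper: identify the event $\{X_{2kN}\geq k\}$ (on which the $k$-max equals $1$), apply Lemma \ref{lem:Xm} with $m=2kN$ and $\theta=1/2$ to get the bound $k/(2(1+2kC_G))$, and minimise over $k\geq 1$. The only cosmetic difference is that you first write the expectation exactly as $\Pro(X_{\ell N}\geq k)$ and then pass to $\Pro(X_{2kN}\geq k)$ by monotonicity, whereas the paper restricts the integral to $\{X_{2kN}\geq k\}$ directly; both are valid.
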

\begin{proof}
	Let $k\leq \ell/2$. Using Lemma \ref{lem:Xm} with $\theta=1/2$, we obtain 
	\begin{align*}
	\E \kmax_{1\leq i \leq n} b_{i g(i)} 
	&\geq \int_{\{g: X_{2kN}(g)\geq k\}} \kmax_{1\leq i \leq n} b_{ig(i)} \dif\Pro(g)\\
	& = \Pro(X_{2kN}\geq k)
	\geq \frac{k}{2(1+2kC_G)}\\
	& \geq \frac{1}{2+4C_G}. \qedhere
	\end{align*}
\end{proof}

\subsection*{Proof of the lower estimate in Theorem \ref{thm:main}}

By Theorem \ref{thm: estimate a and tilde a general case} we obtain 
\begin{align*}
\E \sum_{k=1}^\ell \kmax_{1\leq i \leq n} a_{ig(i)} & \geq \frac{1}{8(1+2C_G)} \E \sum_{k=1}^\ell \kmax_{1\leq i \leq n} \widetilde a_{ig(i)}.
\end{align*}
Now take $b\in\mathcal A_h$ consisting of $\ell N$ ones and $(n-\ell)N$ zeros such that 
\[
\left(\frac{1}{\ell N}\sum_{i=1}^{\ell N}s_a(i)\right) \cdot b=\widetilde a. 
\]
Then, by Lemma \ref{prop:lower estimate k-max}
\begin{align*}
 \E \sum_{k=1}^\ell \kmax_{1\leq i \leq n} \widetilde a_{ig(i)} 
& = \left[\E \sum_{k=1}^\ell \kmax_{1\leq i \leq n} b_{ig(i)}\right]  \frac{1}{\ell N} \sum_{i=1}^{\ell N}s_a(i) \\
& \geq \left[\E \sum_{k=1}^{\ell/2} \kmax_{1\leq i \leq n} b_{ig(i)}\right]  \frac{1}{\ell N} \sum_{i=1}^{\ell N}s_a(i) \\
& \geq \frac{1}{4+8C_G}\frac{1}{N} \sum_{j=1}^{\ell N}s(j).
\end{align*}
Combining the above estimates yields
\begin{equation}\label{eq:lower_estimate}
  \E \sum_{k=1}^\ell \kmax_{1\leq i \leq n} a_{ig(i)}
  \geq \frac{1}{32(1+2C_G)^2} \frac{1}{N} \sum_{j=1}^{\ell N}s(j),
\end{equation}
which is the lower estimate in Theorem \ref{thm:main}.

\section{The upper bound}\label{subsec: upper bound G}

We will now prove the upper bound of Theorem \ref{thm:main} via an extreme point argument. To do so, we first use the fact that the average of the $j\leq n\cdot N$ largest entries of a matrix $a\in\R^{n\times N}$ is equivalent to an Orlicz norm $\|a\|_{M_j}$ (cf. Lemma \ref{lem:approximation by orlicz norm}). Then, since the expected value of the average of order statistics defines a norm on $\R^{n \times N}$ as well, it is enough to prove the upper bound in Theorem \ref{thm:main} for the extreme points of $B^n_{M_j}$.  

Recall that, for a vector $(x_i)_{i=1}^n\in\R^n$, we denote  the decreasing rearrangement of $(|x_i|)_{i=1}^n$ by $(x_i^*)_{i=1}^n$. 
We start with the approximation of sums of decreasing rearrangements of vectors $x\in\R^n$ by
equivalent  Orlicz norms.

The following result is due to C. Sch\"utt (private communication).
With his permission we include it here.

\begin{lemma}\label{lem:approximation by orlicz norm}
Let $j\in\N$, $1\leq j \leq n$. Then, for all $x\in\R^n$, we have
\[
\frac{1}{2} \sum_{i=1}^j x_i^* \leq \|x\|_{M_j} \leq \sum_{i=1}^j x_i^*,
\]
where
\begin{equation}\label{eq:definition of Mk}
M_j(t) :=
\begin{cases} 
   			     0, & \quad 0\leq t \leq 1/j,\\
                 t-1/j , & \quad 1/j<t. 
 \end{cases}
\end{equation}
\end{lemma}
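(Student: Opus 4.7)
Since $\|\cdot\|_{M_j}$ and $\sum_{i=1}^j x_i^*$ are both permutation and sign invariant, I would first reduce to the case $x_1 \ge \cdots \ge x_n \ge 0$, so that $x_i = x_i^*$, and write $T := \sum_{i=1}^j x_i^*$. The key observation is that $M_j(t) = \max\{t - 1/j,\,0\}$ is piecewise linear, so for any $\lambda > 0$
$$
\sum_{i=1}^n M_j\Big(\frac{x_i^*}{\lambda}\Big) \;=\; \frac{1}{\lambda}\sum_{i=1}^{k} x_i^* \;-\; \frac{k}{j},
$$
where $k = k(\lambda)$ is the number of indices $i$ with $x_i^* > \lambda/j$. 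Computing $\|x\|_{M_j}$ thus reduces to tracking how $k(\lambda)$ relates to $j$.

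For the upper bound I plug in $\lambda = T$. Monotonicity of $x_i^*$ gives $j\,x_j^* \le T$, hence $x_j^* \le T/j$ and therefore $k(T) \le j$. Combined with the trivial estimate $\sum_{i=1}^{k(T)} x_i^* \le T = \lambda$, the display above is bounded by $1 - k(T)/j \le 1$, which by the definition of the Luxemburg norm yields $\|x\|_{M_j} \le T$.

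For the lower bound I take any $\lambda > 0$ with $\sum_i M_j(x_i^*/\lambda) \le 1$ and set $k := \min\{k(\lambda),\,j\}$. The coordinates $x_{k+1}^*, \dots, x_j^*$ are each $\le \lambda/j$, so $\sum_{i=1}^k x_i^* \ge T - (j-k)\lambda/j$; keeping only the first $j$ summands and substituting gives
$$
1 \;\ge\; \sum_{i=1}^n M_j\Big(\frac{x_i^*}{\lambda}\Big) \;\ge\; \frac{T}{\lambda} - \frac{j-k}{j} - \frac{k}{j} \;=\; \frac{T}{\lambda} - 1,
$$
so $\lambda \ge T/2$, and passing to the infimum yields $\|x\|_{M_j} \ge T/2$.

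The only genuine subtlety is the bookkeeping of $k(\lambda)$ in each direction; the pleasant feature is that in both bounds the subtracted term $-k/j$ cancels exactly with the $(j-k)/j$ coming from truncating $\sum x_i^*$, so the constants $1$ on the right and $\tfrac{1}{2}$ on the left drop out without any further estimation.
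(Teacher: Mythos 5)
Your proof is correct and takes essentially the same route as the paper: for the upper bound you test $\lambda=\sum_{i=1}^j x_i^*$ and observe that only the first $j$ coordinates contribute, each by at most $x_i^*/\lambda$, while for the lower bound you use $M_j(t)\ge t-1/j$ on the first $j$ coordinates to force $\sum_i M_j(x_i^*/\lambda)\ge T/\lambda-1$ for any admissible $\lambda$. Your explicit bookkeeping via $k(\lambda)$ is just a tidier organization of the same two estimates, so there is nothing to add.
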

\begin{proof}
Let $x\in\R^n$. We start with the right hand side inequality. 
Of course,
\[
\frac{1}{j}\sum_{i=1}^jx_i^* \geq x_k^*,\qquad\forall j \leq k \leq n.
\]
Hence, for all $k\geq j$,
\begin{equation*}
M_j\left(\frac{x_k^*}{\sum_{i=1}^jx_i^*} \right) \leq M_j\left(\frac{1}{j} \right) = 0.
\end{equation*}
Therefore, we obtain
\[
\sum_{k=1}^n M_j\left( \frac{|x_k|}{\sum_{i=1}^jx_i^*}\right)
= \sum_{k=1}^{j-1} M_j\left( \frac{x_k^*}{\sum_{i=1}^jx_i^*}\right)
\leq \sum_{i=1}^{j-1} \frac{x_k^*}{\sum_{i=1}^jx_i^*} \leq 1.
\]

Now the other inequality. Take $\gamma< 1/2$. Then, since $M_j(t) \geq t-1/j$ for all $t\geq 0$, we have
\begin{align*}
\sum_{k=1}^n M_j\left(\frac{|x_k|}{\gamma \sum_{i=1}^jx_i^*} \right)
& \geq \sum_{k=1}^j M_j\left(\frac{x_k^*}{\gamma \sum_{i=1}^jx_i^*} \right) \\
& \geq \sum_{k=1}^j \left( \frac{x_k^*}{\gamma \sum_{j=1}^ks(j)} - \frac{1}{k} \right) \\
& = \frac{1}{\gamma}-1 >1.
\end{align*}
Therefore, we have for all $\alpha<1/2$
\[
\|x\|_{M_j} \geq \alpha \sum_{i=1}^jx_i^*. \qedhere
\]
\end{proof}

We are now able to prove the upper bound of Theorem \ref{thm:main}.

\begin{proposition}\label{prop:upper bound average}
Let $a\in\R^{n\times N}$. Then, for all $\ell \leq n$,
\begin{equation}\label{eq:estimate Mk norm}
\E \sum_{k=1}^\ell \kmax_{1\leq i \leq n} a_{ig(i)} \leq  \frac{2}{N} \|a\|_{M_{\ell N}}.
\end{equation}
\end{proposition}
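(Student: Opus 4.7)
The strategy is an extreme point argument. Write
\[
\Phi(a) := \E \sum_{k=1}^{\ell} \kmax_{1\leq i\leq n} |a_{ig(i)}|;
\]
this is a norm on $\R^{n\times N}$ (it is an average of Ky Fan $\ell$--norms of the vectors $(|a_{ig(i)}|)_i$), and so is $a\mapsto \|a\|_{M_{\ell N}}$ (with the matrix regarded as an element of $\R^{nN}$). By positive homogeneity, the claimed inequality is equivalent to $\Phi(a)\leq 2/N$ for every $a$ in the unit ball $B_{M_{\ell N}}^{nN}$. Since $\Phi$ is convex and continuous and the ball is compact and convex, this maximum is attained at an extreme point, so it suffices to check the bound there.

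Next I would identify the extreme points via Lemma \ref{lem:extreme points of orlicz balls}. The Orlicz function $M_{\ell N}(t)=\max\{0,\,t-1/(\ell N)\}$ is affine on each of the intervals $[0,1/(\ell N)]$ and $[1/(\ell N),\infty)$, so its only point of strict convexity is the corner $t=1/(\ell N)$; that is, $\sconv(M_{\ell N})=\{1/(\ell N)\}$. Hence any extreme point $a$ of $B_{M_{\ell N}}^{nN}$ satisfies $\sum_{i,j}M_{\ell N}(|a_{ij}|)=1$ and $|a_{ij}|=1/(\ell N)$ for every pair $(i,j)$ except at most one. At the exceptional entry $(i_0,j_0)$ the constraint then forces $M_{\ell N}(|a_{i_0 j_0}|)=1$, i.e.\ $|a_{i_0 j_0}|=1+1/(\ell N)$. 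Since $\Phi$ depends only on $(|a_{ij}|)$, I may take all entries nonnegative.

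Finally, I would compute $\Phi(a)$ directly for such an extreme point. For each $g\in G$, the vector $(a_{ig(i)})_{i=1}^n$ has the value $1+1/(\ell N)$ at index $i_0$ on the event $\{g(i_0)=j_0\}$ (of probability $1/N$ by condition (i)), while otherwise all its coordinates equal $1/(\ell N)$. The sum of its top $\ell$ entries is therefore $1+\ell\cdot 1/(\ell N)=1+1/N$ on $\{g(i_0)=j_0\}$ and $\ell\cdot 1/(\ell N)=1/N$ on the complement, giving
\[
\Phi(a) \;=\; \tfrac{1}{N}\bigl(1+\tfrac{1}{N}\bigr) + \bigl(1-\tfrac{1}{N}\bigr)\cdot\tfrac{1}{N} \;=\; \tfrac{2}{N}.
\]
Since $\|a\|_{M_{\ell N}}=1$, this is precisely the asserted bound, and the inequality extends to all of $B_{M_{\ell N}}^{nN}$ by the extreme--point reduction.

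The only substantive step is pinning down $\sconv(M_{\ell N})=\{1/(\ell N)\}$ and deducing the extreme--point description; once that is in hand, the expectation computation is immediate and, pleasantly, is tight on extreme points.
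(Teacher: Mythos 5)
Your proposal is correct and follows essentially the same route as the paper: reduce by convexity/homogeneity to the extreme points of $B_{M_{\ell N}}^{nN}$, identify them via the characterization of $\ext(B_M^n)$ as the matrices with all entries $\pm 1/(\ell N)$ except one entry of modulus $1+1/(\ell N)$, and compute the expectation to be exactly $2/N$ by conditioning on $\{g(i_0)=j_0\}$. The only (harmless) cosmetic difference is that you close the argument by noting $\|a\|_{M_{\ell N}}=1$ at an extreme point, whereas the paper passes through the identity $\tfrac1N\sum_{j=1}^{\ell N}s(j)=2/N$ and then invokes Lemma~\ref{lem:approximation by orlicz norm}.
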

\begin{proof}
It is sufficient to show \eqref{eq:estimate Mk norm} for all $a\in\ext{(B^{nN}_{M_\ell})}$. Therefore, by Lemma \ref{lem:extreme points of orlicz balls} (2), we only need to consider matrices $a\in\R^{n \times N}$ that are of the form
\begin{equation}\label{eq:extreme points of B_Mk}
a_{ij} :=
\begin{cases}
                 \frac{1}{\ell N}, & (i,j)\neq (i_0,j_0),\\
                 1+\frac{1}{\ell N} , & (i,j)= (i_0,j_0)
\end{cases}
\end{equation}
for some index pair $(i_0,j_0)\in I\times J$
or satisfy $a_{ij}=\frac{1}{\ell N}$ for all $i=1,\dots,n$, $j=1,\dots, N$. However, the latter choice of $a$ does not satisfy condition (1) in Lemma \ref{lem:extreme points of orlicz balls}, since in that case $\sum_{i=1}^{nN}M_{\ell N}(s_a(i))=0$. So the extreme points of $B_{M_{\ell N}}^{nN}$ with positive entries are given by \eqref{eq:extreme points of B_Mk}. Now, let $a$ be such a point in $B^{nN}_{M_{\ell N}}$. Then  
\begin{align*}
& \E \sum_{k=1}^\ell \kmax_{1\leq i \leq n} a_{ig(i)} \\
& = \int_{\{g:g(i_0)=j_0 \}} \sum_{k=1}^\ell \kmax_{1\leq i \leq n} a_{ig(i)} \dif \Pro(g) +  \int_{\{g:g(i_0)\neq j_0 \}} \sum_{k=1}^\ell \kmax_{1\leq i \leq n} a_{ig(i)} \dif \Pro(g) \\
& = \frac{2}{N}.
\end{align*}
On the other hand, we also have
\[
\frac{1}{N}\sum_{j=1}^{\ell N} s(j) = \frac{2}{N}.
\]
Therefore,
\[
\E \sum_{k=1}^\ell \kmax_{1\leq i \leq n} a_{ig(i)} = \frac{1}{N}\sum_{j=1}^{\ell N} s(j),
\]
Since by Lemma \ref{lem:approximation by orlicz norm}
\[
\sum_{j=1}^{\ell N} s(j) \leq 2 \|a\|_{M_{\ell N}},
\]
the result follows.
\end{proof}

\subsection*{Conclusion of the proof of Theorem~\ref{thm:main}}

Combining Lemma~\ref{lem:approximation by orlicz norm} and
Proposition~\ref{prop:upper bound average}, we get
\begin{equation}\label{eq:upper_estimate}
  \E \sum_{k=1}^\ell \kmax_{1\leq i \leq n} a_{ig(i)} \leq  \frac{2}{N} \sum_{i=1}^{\ell N}s(i),
\end{equation}
which is the upper estimate in Theorem \ref{thm:main}.
Inequalities~\eqref{eq:lower_estimate} and~\eqref{eq:upper_estimate} together complete the proof.

\section{An application of Theorem~\ref{thm:main}}\label{sec:applications}

We now present an application and use Theorem \ref{thm:main} to prove Theorem \ref{thm:application}. The proof uses real interpolation and is, what we find, a natural approach to combinatorial inequalities such as \eqref{eq:Kwapien Schuett estimate p-norm} that were obtained in \cite{KS1}. Please notice that \cite[Theorem 1.2]{KS1} is a special case of Theorem \ref{thm:application} when $G=\mathfrak S_n$. 

Let us first recall some basic notions from interpolation theory. A pair $(X_0,X_1)$ of Banach spaces is called a compatible couple if there is some Hausdorff topological space $\mathcal H$, in which each of $X_0$ and $X_1$ is continuously embedded. For example, $(L_1,L_\infty)$ is a compatible couple, since $L_1$ and $L_\infty$ are continuously embedded into the space of measurable functions that are finite almost everywhere. Of course, any pair $(X,Y)$ for which one of the spaces is continuously embedded in the other is a compatible couple.

For a compatible couple $(X_0,X_1)$ (with corresponding Hausdorff space $\mathcal H$), we equip $X_0+X_1$ with the norm
\begin{equation}\label{eq:norm on sum of spaces}
\| x \|_{X_0+X_1} := \inf_{x=x_0+x_1}\big(\|x_0\|_{X_0} + \|x_1\|_{X_1}\big),
\end{equation}
under which this space becomes a Banach space. This definition is independent of the particular space $\mathcal H$.
 
The $K$-functional is constructed from the expression \eqref{eq:norm on sum of spaces} by introducing a positive weighting factor $t>0$, as follows:

Let $(X_0,X_1)$ be a compatible couple. The $K$-functional is defined for each $f\in X_0+X_1$ and $t>0$ by
\[
K(f,t) = K(f,t;X_0,X_1) := \inf_{f=f_0+f_1}(\|f_0\|_{X_0} + t \|f_1\|_{X_1}),
\]
where the infimum extends over all representations $f=f_0+f_1$ of $f$ with $f_0 \in X_0$ and $f_1\in X_1$. 

Now, let $(X_0,X_1)$ be a compatible couple and suppose $0<\theta < 1$, $1\leq q < \infty$ or $0\leq \theta \leq 1$ and $q=\infty$. The space $(X_0,X_1)_{\theta,q}$ consists of all $f\in X_0+X_1$ for which the functional
\begin{equation*}\label{eqn:norm on theta q spaces}
  \|f\|_{\theta,q} := 
    \begin{cases}
                 \left( \int_{0}^\infty \big[ t^{-\theta}K(f,t;X_0,X_1)\big]^q \frac{\dif t}{t}\right)^{1/q}, & \quad 0<\theta<1,~1\leq q < \infty  ,\\
                 \sup_{t>0}t^{-\theta}K(f,t;X_0,X_1) , & \quad 0 \leq \theta \leq 1, ~q=\infty,
    \end{cases}
 \end{equation*}
is finite.

\subsection*{Proof of Theorem \ref{thm:application}}
To show the upper bound we use the same argument as in \cite{KS1}. For the sake of completeness we include it here. Let $a\in\R^{n\times N}$ and write $a=a'+a''$, where $a'$ contains the $N$ largest entries of $a$ and zeros elsewhere, and $a''$ contains $s(N+1)\dots,s(nN)$ and zeros elsewhere. Then, using triangle and Jensen's inequality, we obtain
\begin{align*}
\E \left(\sum_{i=1}^n |a_{ig(i)}|^p\right)^{1/p} & \leq \E \left(\sum_{i=1}^n |a'_{ig(i)}|^p\right)^{1/p} +  \E \left(\sum_{i=1}^n |a''_{ig(i)}|^p\right)^{1/p} \\
& \leq \E \sum_{i=1}^n |a'_{ig(i)}| +  \E\left(\sum_{i=1}^n |a''_{ig(i)}|^p\right)^{1/p} \\
& \leq \sum_{k=1}^{N}\sum_{i=1}^n \frac{1}{N}a_{ik}' + \left( \sum_{k=1}^N \sum_{i=1}^n\frac{1}{N}(a_{ik}'')^p \right)^{1/p} \\
& = \frac{1}{N} \sum_{k=1}^N s(k) + \left(\frac{1}{N}\sum_{k=N+1}^{nN}s(k)^p \right)^{1/p}.
\end{align*}

We will now prove the lower bound. Let $1\leq p < \infty$ and $\theta=1-1/p$.
First, recall that
\[
\|a\|_{\theta,p} = \left( \int_{0}^\infty
  \Big[t^{-\theta}K\big(a,t;L_1^{|G|}(\ell_1^n),L_1^{|G|}(\ell_\infty^n)\big)\Big]^p
  \frac{\dif t}{t}
\right)^{1/p}.
\]
Second, observe that
\begin{align*}
  K\big(a,t;L_1^{|G|}(\ell_1^n),L_1^{|G|}(\ell_\infty^n)\big)
  & = \inf_{a=b+c} \|b\|_{L_1^{|G|}(\ell_1^n)} + t\, \|c\|_{L_1^{|G|}(\ell_\infty^n)}\\
  & = \inf_{a=b+c} \int_G \|b(g)\|_{\ell_1^n} + t\, \|c(g)\|_{\ell_\infty^n} \dif\Pro(g)\\
  & = \int_G \inf_{a(g)=b(g)+c(g)} \|b(g)\|_{\ell_1^n} + t\, \|c(g)\|_{\ell_\infty^n} \dif\Pro(g).
\end{align*}
Hence, we have
\begin{equation}\label{eq:representation k functional}
  K\big(a,t;L_1^{|G|}(\ell_1^n),L_1^{|G|}(\ell_\infty^n)\big)
  = \int_G K\big(a(g),t;\ell_1^n,\ell_\infty^n\big)\dif\Pro(g).
\end{equation}
Third, the triangle inequality for integrals yields
\begin{align*}
  \|a\|_{\theta,p}
  & = \bigg( \int_0^\infty [
    t^{-\theta} K\big(a,t;L_1^{|G|}(\ell_1^n),L_1^{|G|}(\ell_\infty^n)\big)
  ]^p
  \frac{\dif t}{t} \bigg)^{1/p}\\
  & = \bigg( \int_0^\infty [
    \int_G t^{-\theta} K\big(a(g),t;\ell_1^n,\ell_\infty^n)\big) \dif\Pro(g)
  ]^p
  \frac{\dif t}{t} \bigg)^{1/p}\\
  & \leq \int_G \bigg( \int_0^\infty
    \Big[t^{-\theta} K\big(a(g),t;\ell_1^n,\ell_\infty^n)\big)\Big]^p
   \frac{\dif t}{t} \bigg)^{1/p}
  \dif{\Pro(g)}\\
  & = \int_G \|a(g)\|_{\ell_p^n} \dif{\Pro(g)}.
\end{align*}
Therefore, we have
\[
\|a\|_{L_1^{|G|}(\ell_p^n)} \geq \|a\|_{\theta,p}, 
\]
for all $a:G\to\R^n$, $a(g)(i)=a_{ig(i)}$. Now we compute the $K$-functional of $(L_1^{|G|}(\ell_1^n),L_1^{|G|}(\ell_\infty^n))$ using \eqref{eq:representation k functional}. First, observe that 
\begin{align*}
K(a(g),t;\ell_1^n,\ell_\infty^n) & = \int_0^{\lfloor t\rfloor}(a(g))^*(s) \dif s + \int_{\lfloor t\rfloor}^t(a(g))^*(s) \dif s \\
& = \sum_{k=1}^{\lfloor t\rfloor}\kmax |a(g)| + (t-\lfloor t\rfloor)\cdot \tmax |a(g)|.
\end{align*}
Then, using \eqref{eq:representation k functional}, we obtain 
\begin{align*}
  \|a\|_{1-\frac{1}{p},p}^p & = \int_{0}^\infty t^{-p}K(a,t;L_1^{|G|}(\ell_1^n),L_1^{|G|}(\ell_\infty^n))^p \dif t \\
& = \int_{0}^\infty t^{-p}\Big( \int_G K(a(g),t;\ell_1^n,\ell_\infty^n) \dif\Pro(g) \Big)^p \dif t \\
& = \int_{0}^\infty t^{-p}\Big( \int_G \sum_{k=1}^{\lfloor t\rfloor}\kmax |a(g)| + (t-\lfloor t\rfloor)\cdot \tmax |a(g)|) \dif\Pro(g) \Big)^p \dif t \\
& \geq \int_0^1 \big(\E a(g)^*(1) \big)^p \dif t + \int_{1}^{n+1} t^{-p} \Big(\E \sum_{k=1}^{\lfloor t\rfloor} a(g)^*(k) \Big)^p \dif t \\
& \geq c_1 \bigg[ \big(\E a(g)^*(1) \big)^p + \sum_{\ell=1}^{n}  \Big(\E \frac{1}{\ell}\sum_{k=1}^{\ell} a(g)^*(k) \Big)^p\bigg],
\end{align*}
where $c_1$ is a positive absolute constant. By Theorem \ref{thm:main}, we get
\begin{align*}
	\Big[\E a(g)^*(1) \Big]^p + \sum_{\ell=1}^{n}  \Big(\E \frac{1}{\ell}\sum_{k=1}^{\ell} a(g)^*(k) \Big)^p 
	& \geq c_2\bigg[ \Big(\frac{1}{N}\sum_{j=1}^Ns(j) \Big)^p + \sum_{\ell=1}^n \Big(\frac{1}{\ell N}\sum_{j=1}^{\ell N} s(j) \Big)^p \bigg]\\
	& \geq c_2\bigg[ \Big(\frac{1}{N}\sum_{j=1}^Ns(j) \Big)^p + \sum_{\ell=1}^n \frac{1}{N} \sum_{j=\ell N+1}^{(\ell+1)N}s(j)^p \bigg] \\
	& = c_2 \bigg[\Big(\frac{1}{N}\sum_{j=1}^Ns(j) \Big)^p +  \frac{1}{N} \sum_{\ell=N+1}^{nN} s(j)^p\bigg],
\end{align*}
where $c_2$ is a positive constant only depending on $C_G$.
Taking the $p$-th root concludes the proof.


\subsection*{Acknowledgments}
We would like to thank our colleague Erhard Aichinger for helpful discussions.

The first named author is supported by the Austrian Science Fund, FWF P23987 and FWF P22549.
The second named author is supported by the Austrian Science Fund, FWF P23987.
The third named author is supported by the Austrian Science Fund, FWFM 1628000.

\bibliographystyle{abbrv}
\bibliography{combinatorial_inequalities_averages_order_statistics}

\end{document}